\newtheorem{theorem}{Theorem}
\newtheorem{lemma}[theorem]{Lemma}
\newtheorem{prop}[theorem]{Proposition}
\theoremstyle{definition}
\newtheorem{definition}[theorem]{Definition}
\theoremstyle{remark}
\newtheorem{remark}[theorem]{Remark}
\newcommand{\bR}{\mathbb{R}}
\newcommand{\bC}{\mathbb{C}}
\newcommand{\cF}{\mathcal{F}}
\newcommand{\FP}{\mathrm{FP}}
\newcommand{\II}{\mathrm{II}}
\newcommand{\Vol}{\mathrm{Vol}}
\newcommand{\Volks}{\Vol_{\mathrm{KS}}}
\newcommand{\Tr}{\mathrm{Tr}}
\newcommand{\Volr}{\Vol_R}
\begin{document}
\title[Positivity of the renormalized volume of almost-Fuchsian 
manifolds]{Positivity of the renormalized volume of almost-Fuchsian hyperbolic $3$-manifolds}
\author{Corina Ciobotaru}
\thanks{C.~C. was supported by the FRIA}
\address{Corina Ciobotaru, Universit\'e de Gen\`eve, Section de math\'ematiques, 2-4 rue du Li\`{e}vre, CP 64, 1211 Gen\`eve 4, Switzerland}
\email{corina.ciobotaru@unige.ch}
\author{Sergiu Moroianu}
\thanks{S.~M. was partially supported by the CNCS project PN-II-RU-TE-2011-3-0053}
\address{Sergiu Moroianu, Institutul de Matematic\u{a} al Academiei Rom\^{a}ne\\ P.O. Box 1-764\\ RO-014700 Bucharest\\
Romania}
\email{moroianu@alum.mit.edu}
\date{\today}
\begin{abstract}
We prove that the renormalized volume of almost-Fuchsian hyperbolic $3$-ma\-ni\-folds 
is non-negative, with equality only for Fuchsian manifolds.
\end{abstract}
\maketitle

\section{Introduction}

The renormalized volume $\Volr$ is a numerical invariant associated to an infinite-volume Riemannian
manifold with some special structure near infinity, extracted 
from the divergent integral of the volume form. Early instances of renormalized volumes appear 
in Henningson--Skenderis~\cite{hs} for asymptotically hyperbolic Einstein metrics, 
and in Krasnov~\cite{Kr} for Schottky hyperbolic $3$-manifolds.
In Takhtajan--Teo~\cite{TaTe} the renormalized volume is identified to the so-called Liouville 
action functional, a cohomological quantity known since the pioneering work of 
Takhtajan--Zograf \cite{TaZo}
to be a K\"ahler potential 
for the Weil--Petersson symplectic form on the deformation space of certain Kleinian manifolds:
\begin{equation}
\partial\overline{\partial} \Volr= \frac{1}{8i}\omega_{\mathrm{WP}}.
\label{kp}\end{equation}

Krasnov--Schlenker~\cite{KS08} studied the renormalized volume using a geometric description 
in terms of foliations by equidistant surfaces.
In the context of quasi-Fuchsian hyperbolic $3$-manifolds they computed the Hessian of $\Volr$
at the Fuchsian locus. They also gave a direct proof of the identity \eqref{kp} 
in that setting.
Recently, Guillarmou--Moroianu~\cite{CS} studied the renormalized volume 
$\Volr$ in a general context, for geometrically finite hyperbolic $3$-manifolds without 
rank-$1$ cusps. There, $\Volr$ appears as the log-norm of a holomorphic section 
in the Chern--Simons line bundle over the Teichm\"uller space.

Huang--Wang~\cite{huangwang} looked at renormalized volumes in their study of 
almost-Fuchsian hyperbolic $3$-manifolds. However, their renormalization 
procedure does not involve uniformization of the surfaces at infinity, hence the invariant 
$RV$ thus obtained is constant (and negative) on the moduli space of almost-Fuchsian metrics.

There is a superficial analogy between $\Volr$ and the mass of asymptotically Euclidean 
mani\-folds. Like in the positive mass conjecture, one may ask if $\Volr$ is positive for all
convex co-compact hyperbolic $3$-manifolds, or at least for quasi-Fuchsian manifolds.
One piece of supporting evidence follows from the computation by Takhtajan--Teo~\cite{TaTe} 
of the variation of $\Volr$ (or equivalently, of the Liouville action functional) 
on deformation spaces. 
In the setting of quasi-Fuchsian manifolds, Krasnov--Schlenker~\cite{KS08} 
noted that the functional $\Volr$ vanishes at the Fuchsian locus. 
When one component of the boundary is kept fixed, the only critical point of $\Volr$ 
is at the unique Fuchsian metric. Moreover, this point is a local minimum because the 
Hessian of $\Volr$ is positive definite there as it coincides with the Weil-Petersson metric. 
Therefore, at least in a neighborhood of the Fuchsian locus, we do have positivity.
We emphasize that to ensure vanishing of the renormalized volume for Fuchsian manifolds, 
the renormalization procedure used in
Krasnov--Schlenker~\cite{KS08} differs from Guillarmou--Moroianu~\cite{CS} or from Huang--Wang~\cite{huangwang} by the universal constant $2\pi(1-g)$ where $g\geq 2$ is the genus.
It is the definition from Krasnov--Schlenker~\cite{KS08} that we use below. These results are
not sufficient to conclude that $\Volr$ is positive since the Teichm\"uller space 
is not compact and $\Volr$ is not proper (by combining the results in Schlenker~\cite{Sch} and Brock~\cite{Brock}, 
one sees that the difference between $\Volr$ and the Teichm\"uller distance is bounded, while 
the Teichm\"uller metric is incomplete).
Another piece of evidence towards positivity was recently found by Schlenker~\cite{Sch}, 
who proved that $\Volr$ is bounded from below by some explicit (negative) constant.

In this note we prove the positivity of $\Volr$ on the almost-Fuchsian space, 
which is an explicit open subset of the space of quasi-Fuchsian metrics. While this improves 
the local positivity result of Krasnov--Schlenker~\cite{KS08}, it does of course not prove
positivity for every quasi-Fuchsian metric, that is therefore left for further studies. 

\section{Almost-Fuchsian hyperbolic $3$-manifolds}

\begin{definition}
\label{def::quasi-Fuchsian}
A quasi-Fuchsian hyperbolic 3-manifold $X$ is the quotient of $\mathbb{H}^3$ by a 
quasi-Fuchsian group, i.e., a Kleinian group $\Gamma$ of $\mathrm{PSL}_2(\bC)$ 
whose limit set is a Jordan curve. 
\end{definition}
When the group $\Gamma$ is a co-compact Fuchsian group 
(a subgroup of $\mathrm{PSL}_2(\bR)$), the Jordan curve in question is the $1$-point 
compactification of the real line, and $\Gamma\backslash \mathbb{H}^3$ is called a Fuchsian 
hyperbolic 3-manifold.
Equivalently, a quasi-Fuchsian manifold $(X,g)$ is a complete hyperbolic 
$3$-manifold diffeomorphic to $\bR \times \Sigma_0$, where $\Sigma_0$ is a compact Riemann 
surface of genus $\geq 2$ and with the hyperbolic Riemannian metric $g$ on $X$ described 
as follows. There exist $t_{0}^{-} \leq t_{0}^{+} \in \bR$ such that the metric $g$ on 
$[t_{0}^+,\infty) \times \Sigma_0$, respectively
on $(-\infty,t_{0}^-] \times \Sigma_0$, is given by 
\begin{align}
\label{metricg}
g=dt^2+ g_t^{\pm},&& g_t^{\pm}=g_0^{\pm}((\cosh(t)+A^{\pm}\sinh(t))^2\cdot,\cdot),
\end{align}
where $t \in[t_{0}^+,\infty)$, respectively, $t \in (-\infty,t_{0}^-]$, $g_0^{\pm}$ is a metric on $\Sigma_{0}^{\pm} = \{t_{0}^{\pm}\} \times \Sigma_0$ and $A^{\pm}$ 
is a symmetric endomorphism of $T\Sigma_0^{\pm}$ satisfying the Gauss and Codazzi--Mainardi 
equations
\begin{align}
\det(A^{\pm})= {}&\kappa^{\pm}+1,\label{hGe}\\
 d^\nabla \II^{\pm} ={}&0.\nonumber
\end{align}

Here, $\kappa^{\pm}$ is the Gaussian curvature of $(\Sigma_0^{\pm},g_0^{\pm})$ and $d^\nabla$ 
represents the de Rham differential twisted by the Levi--Civita connection acting on $1$-forms 
with values in 
$T^*\Sigma_0^{\pm}$. By definition, $\II^{\pm}:=g_0^{\pm}(A^{\pm}\cdot,\cdot)$, 
called the second fundamental form 
of the embedding $\Sigma_0^{\pm}\hookrightarrow X$, is the bilinear form associated to $A^{\pm}$. 
Notice that the eigenvalues of $A^{\pm}$ should be less than $1$ in absolute value 
for the expression~(\ref{metricg}) 
to be a well-defined metric for all $t \in \bR$.

\begin{definition}[Uhlenbeck~\cite{Uhl}]
\label{def::almost-Fuch}
An almost-Fuchsian hyperbolic $3$-manifold $(X,g)$ is a quasi-Fuchsian hyperbolic $3$-manifold 
containing a closed minimal surface $\Sigma$ whose principal curvatures belong 
to $(-1,1)$.
\end{definition}

Roughly speaking, an almost-Fuchsian manifold is obtained as a small deformation 
of a Fuchsian manifold, which, by definition, is the quotient of $\mathbb{H}^3$ 
by the action of a co-compact Fuchsian group. In particular, Fuchsian manifolds 
are almost-Fuchsian. 

\begin{remark}
\label{rem::almost-F_all_t} By  Uhlenbeck~\cite[Theorem~3.3]{Uhl}, an almost-Fuchsian hyperbolic 
$3$-manifold $X$ 
admits a {\it unique} minimally embedded surface $\Sigma$, 
whose principal curvatures are thus in $(-1,1)$. 
By taking $\Sigma_0^{\pm}= \Sigma$, 
the expression~(\ref{metricg}) is well-defined for all $t \in \bR$. 
\end{remark}

\subsection{Funnel ends}
\label{subsec::funnel_ends}

Let $(X,g)=\Gamma\backslash \mathbb{H}^3$ be a quasi-Fuchsian manifold. 
Recall that the infinity of $X$ 
is defined as the space of geodesic rays escaping from every compact, modulo the equivalence 
relation of being asymptotically close to each other. By the Jordan separation theorem,
the complement of the limit set of $\Gamma$ consists of two disjoint topological disks. 
The infinity of a quasi-Fuchsian 
manifold is thus a disjoint union of two `ends', corresponding to geodesics in 
$\mathbb{H}^3$ pointing towards one or the other of these two connected components. 
For an end of $X$, a \emph{funnel} 
is a cylinder $[t_0,\infty)\times \Sigma \hookrightarrow X$ isometrically embedded in $X$ 
so that the pullback of the hyperbolic 
metric $g$ of $X$ is of the form~(\ref{metricg}) for $t \in [t_0,\infty)$, where $A$ satisfies 
the Gauss and Codazzi--Mainardi equations as above.  
Notice that the gradient of the function $t$ on the funnel $ [t_0,\infty)\times \Sigma$ 
is a geodesic vector field of length $1$; thus $\{\infty\} \times \Sigma$ is in bijection with
the corresponding end of $X$. A funnel has an obvious smooth compactification to a manifold 
with boundary, namely, $[t_0,\infty]\times \Sigma$. On this compactification, 
the Riemannian metric $e^{-2t}g$ is smooth in the variable $e^{-t}\in [0,e^{-t_0})$.
Define $h_0:=\lim\limits_{t\to\infty} e^{-2t}g$ to be the metric induced 
on the surface at infinity $\{\infty\}\times \Sigma$. Explicitly, 
\[h_0=\tfrac14 g_0((1+A)^2\cdot,\cdot).\] 
In this way, one obtains a smooth compactification $\overline{X}$ of $X$, together
with a metric at infinity, both depending at first sight on the funnels 
chosen inside each of the two ends of $X$. 

We emphasize however that each end of $X$ admits several funnel structures. 
Consider another cylinder $[t_0',\infty)\times\Sigma'\hookrightarrow X$ 
isometrically embedded in $X$, for a different function $t'$ 
with respect to which the metric 
$g$ takes the form~(\ref{metricg}). Then the gradient flow of $t'$ defines another 
foliation  $[t_0',\infty)\times\Sigma'$. If this funnel determines the same end of $X$ 
as $[t_0,\infty)\times \Sigma$, then the two funnels intersect near infinity.
Up to increasing $t_0'$ if necessary, we can assume that 
$[t_0',\infty)\times\Sigma' \hookrightarrow [t_0,\infty)\times \Sigma$. Moreover, 
for $t_0'$ large enough, the complement of the funnel $[t_0',\infty)\times\Sigma'$ in $X$
is geodesically convex, hence its boundary surface $\{t_0'\}\times \Sigma'$ 
intersects each half-geodesic along the $t$ 
flow in a unique point. Thus $\Sigma'$ is diffeomorphic to $\Sigma$.

The identity map of $X$ extends smoothly on the corresponding compactifications induced by the chosen foliation structures of each of the two ends; so the smooth compactification of $X$ is canonical (i.e., independent of the choice of the funnels). 
Moreover, the induced metrics $h_0,h_0'$ with respect to two foliations are conformal to each other. 
It follows that the metric $g$ induces a conformal class $[h_0]$ on 
$\{\infty\}\times\Sigma \subset \partial_\infty X$. 

Conversely, we recall that for a quasi-Fuchsian manifold $(X,g)$, 
every metric $h_0^\pm$ in the associated conformal class on each of the two ends of $X$ 
is realized 
(near infinity) by a unique funnel, using a special function $t$ that decomposes the funnel as 
presented above. 

\section{The renormalized volume}
\label{subsec::renorm_vol}
Let $(X,g)$ be a quasi-Fuchsian hyperbolic $3$-mani\-fold. 
For each of the two ends of $X$ we choose a funnel with foliation structure 
$[t_j,\infty)\times \Sigma_j$, where $j \in \{1,2\}$. Let $h_0^1,h_0^2$ be the 
corresponding metrics on the boundary 
at infinity of $X$. Choose $t_0:=\max\{t_1,t_2\}$ 
and set $\Sigma=\Sigma_1\sqcup\Sigma_2$, so 
that $[t_0,\infty)\times \Sigma$ is isometrically embedded in $X$. Denote by $h_0$ the  
metric $(h_0^1,h_0^2)$ on the disconnected surface $\Sigma$. 
For $t\geq t_0$, denote by $K_t$ the complement in $X$ of the funnels $[t,\infty)\times \Sigma$, 
which is a compact manifold with boundary $\{t\}\times \Sigma=:\Sigma_t$.
Let $\II^t$, $H^t:\Sigma_t \to \bR$ 
be the second fundamental form, respectively the mean curvature function 
of the boundary surfaces $\Sigma_t=\partial K_t$.

The renormalized volume of $X$ with respect to the metrics $h_0$ (or equivalently, with respect to
the corresponding functions $t$) is defined via the so-called Riesz regularization.
\begin{definition}
\label{def::renorm_vol}
Let $(X,g)$ be a quasi-Fuchsian hyperbolic $3$-manifold which is decomposed into a 
finite-volume open set $K$ and two funnels. 
As explained above, let $h_0$ be the metric in the induced conformal class at infinity of $X$ corresponding to $g$ and the chosen funnels. 
The renormalized volume with respect to $h_0$ is defined by
\[\Volr(X,g;h_0):=\Vol(K)+\FP_{z=0} \int_{X\setminus K} e^{-z|t|} dg,\]
where by $\FP$ we denote the finite part of a meromorphic function. 
\end{definition}
In Definition~\ref{def::renorm_vol}, we implicitly have used the fact, 
which follows from the proof of Proposition~\ref{propks} below, that the integral in the 
right-hand side is meromorphic in $z$. 
In Krasnov--Schlenker~\cite{KS08}, the renormalized volume is defined by integrating the 
volume form on increasingly large 
bounded domains and discarding some explicit terms which are divergent in the limit.
We refer, for example, to Albin~\cite{albin} for a discussion of the link between these two 
types of renormalizations. 
For the sake of completeness, we include here a proof of the equality between these 
two definitions. Some care is needed since the addition in the definition of an universal 
constant, harmless in Guillarmou--Moroianu~\cite{CS} or Huang--Wang~\cite{huangwang}, 
drastically alters the positivity properties
of $\Volr$.

\begin{prop}
\label{propks}
The quantity
\[
\Volks(X,g;h_0):=\Vol(K_t)-\tfrac{1}{4}\int_{\Sigma_t} H^t dg_t +t\pi \chi(\Sigma),
\]
called the (Krasnov--Schlenker) renormalized volume $\Volks$, is independent of $t\in[t_0,\infty)$, 
and coincides with the renormalized volume $\Volr(X,g;h_0)$. 
\end{prop}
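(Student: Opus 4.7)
The plan is to verify both claims by direct computation in the funnel coordinates. On each funnel $[t_0,\infty)\times\Sigma$, with $g=dt^2+g_t$ and $g_t=g_0(S(t)^2\cdot,\cdot)$ for $S(t)=\cosh(t)I+\sinh(t)A$, the key ingredients are: the Riccati equation $\partial_tB+B^2=I$ for the shape operator $B(t)=S(t)^{-1}\partial_tS(t)$ of the equidistant surface $\Sigma_t$ (which follows from $\partial_t^2S=S$ and reflects that the ambient sectional curvature is $-1$); the identity $\partial_t\,dg_t=H^t\,dg_t$, obtained from $dg_t=\det S(t)\,dg_0$ and $\partial_t\log\det S(t)=\Tr(S^{-1}\partial_tS)=H^t$; the Gauss equation $\det B=1+\kappa_t$, where $\kappa_t$ is the intrinsic Gauss curvature of $\Sigma_t$; and the Gauss--Bonnet theorem $\int_{\Sigma_t}\kappa_t\,dg_t=2\pi\chi(\Sigma)$.

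For the first assertion, I would differentiate $\Volks$ in $t$. We have $\frac{d}{dt}\Vol(K_t)=\int_{\Sigma_t}dg_t$. Since $B$ acts on a $2$-dimensional tangent space, $\Tr(B^2)=H^2-2\det B$, and taking traces in Riccati gives $\partial_tH+H^2=2+2\det B=4+2\kappa_t$. Combined with $\partial_t\,dg_t=H^t\,dg_t$ and Gauss--Bonnet,
\[
\frac{d}{dt}\int_{\Sigma_t}H^t\,dg_t=\int_{\Sigma_t}(\partial_tH+H^2)\,dg_t=4\int_{\Sigma_t}dg_t+4\pi\chi(\Sigma),
\]
and the three contributions to $\frac{d}{dt}\Volks$ cancel identically.

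For $\Volks=\Volr$, by the first assertion it suffices to evaluate $\Volks$ at $t=t_0$ and match the Riesz finite part. Using $\det A=1+\kappa_0$ (Gauss), the area density expands as
\[
\det S(t)=-\tfrac{\kappa_0}{2}+\tfrac{\kappa_0+2+H_0}{4}\,e^{2t}+\tfrac{\kappa_0+2-H_0}{4}\,e^{-2t},\qquad H_0=\Tr A,
\]
so $\int_{t_0}^{\infty}e^{-zt}\det S(t)\,dt$ is a linear combination of $e^{-zt_0}/z$, $-e^{(2-z)t_0}/(2-z)$, and $e^{-(2+z)t_0}/(z+2)$, which incidentally establishes the meromorphicity claim in the definition of $\Volr$. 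The only pole at $z=0$ comes from the constant term, with $\FP_{z=0}[e^{-zt_0}/z]=-t_0$. Extracting the finite part, integrating over $\Sigma$, and invoking Gauss--Bonnet yields, per funnel,
\[
\FP_{z=0}\int_{\text{funnel}}e^{-zt}\,dg=t_0\pi\chi(\Sigma)-\tfrac{e^{2t_0}}{8}\int_{\Sigma}(\kappa_0+2+H_0)\,dg_0+\tfrac{e^{-2t_0}}{8}\int_{\Sigma}(\kappa_0+2-H_0)\,dg_0.
\]
On the other hand, $H^{t_0}\,dg_{t_0}=\partial_t(\det S(t))|_{t=t_0}\,dg_0$, and differentiating the expansion of $\det S(t)$ shows $-\tfrac14\int_{\Sigma_{t_0}}H^{t_0}\,dg_{t_0}$ is exactly the sum of the two exponential terms above. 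Summing over the two funnels identifies the full Riesz finite part with $-\tfrac14\int_{\Sigma_{t_0}}H^{t_0}\,dg_{t_0}+t_0\pi\chi(\Sigma)$, whence $\Volr=\Volks$.

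There is no substantial obstacle; the delicate point is the exponential book-keeping in matching the finite part of the Riesz integral against $\Volks$. The essential mechanism is that the Gauss equation places the intrinsic curvature $\kappa_0$ in front of the unique constant-in-$t$ term of $\det S(t)$, so the only pole of the Riesz integral at $z=0$ feeds through Gauss--Bonnet into the universal topological contribution $t_0\pi\chi(\Sigma)$ on the Krasnov--Schlenker side, while the remaining exponential pieces assemble into the mean-curvature integral by direct differentiation of the same expansion.
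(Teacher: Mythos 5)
Your proof is correct, and its core --- expanding the area density $\det S(t)$ into $e^{2t}$, $e^{-2t}$ and constant parts via the Gauss equation, taking the Riesz finite part term by term, and feeding the constant term through Gauss--Bonnet to produce $t\pi\chi(\Sigma)$ while the exponential terms reassemble into $-\tfrac14\int_{\Sigma_t}H^t\,dg_t$ --- is exactly the mechanism of the paper's proof. The only differences are organizational: the paper re-bases the expansion at an arbitrary slice $\Sigma_t$ (so the identity $\Volr=\Volks$ drops out for every $t$ at once), whereas you compute at $t=t_0$ and then invoke the $t$-independence; and for that independence the paper simply cites Schlenker's Lemma 3.6, whereas you supply a short self-contained argument via the Riccati equation $\partial_t A_t+A_t^2=I$ and Jacobi's formula. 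Your Riccati computation checks out ($\partial_t H+H^2=2+2\det A_t=4+2\kappa_t$, whence the three terms of $\tfrac{d}{dt}\Volks$ cancel), so your write-up is, if anything, slightly more complete than the paper's.
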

The definition of $\Volks$ and the independence of $t$ are due to Krasnov--Schlenker~\cite{KS08},
see also Schlenker~\cite[Lemma 3.6]{Sch}.
\begin{proof}
We use the notation from the beginning of this section. Let
\begin{align*}
g=dt^2+ g_t,&& g_t=g_0((\cosh(t)+A\sinh(t))^2\cdot,\cdot)
\end{align*}
be the expression of the metric $g$ in the fixed product decomposition of the funnels $[t_0,\infty)\times \Sigma$, 
as in \eqref{metricg}. 
Recall that $\II^t=g_t(A_t \cdot, \cdot)=\frac{1}{2}g_t'$ and $H^t=\Tr(A_t)$. 
For every $t \in [t_0,\infty)$, one obtains 
\begin{align*}
 dg_t={}&[\cosh^2t+\det(A)\sinh^2(t)+ \Tr(A)\cosh(t)\sinh(t)]dg_0\\
={}&[\cosh^2t+(\kappa_{g_0}+1)\sinh^2(t)+ H^0\cosh(t)\sinh(t)]dg_0
\end{align*}
(in the second line we have used \eqref{hGe} and the definition of $H^0$), and
\begin{align*}
\tfrac{1}{2}g_t'={}&g_0((\cosh(t)+A\sinh(t))(\cosh(t)+A\sinh(t))'\cdot,\cdot)\\
={}&g_t((\cosh(t)+A\sinh(t))^{-2}(\cosh(t)+A\sinh(t))(\cosh(t)+A\sinh(t))'\cdot,\cdot)\\
={}&g_t((\cosh(t)+A\sinh(t))^{-1}(\sinh(t)+A\cosh(t))\cdot,\cdot),\\
\intertext{so}
A_t={}&(\cosh(t)+A\sinh(t))^{-1}(\sinh(t)+A\cosh(t)).
\end{align*}
Denote by $\lambda_1,\lambda_2$ the eigenvalues of the symmetric endomorphism
$A$, so $\lambda_1+\lambda_2=H^0$ and $\lambda_1\lambda_2 = \kappa_{g_0}+1$.
We deduce
\begin{align*}
H^tdg_t={}& (\cosh(2t)H^0+\sinh(2t)(\kappa_{g_0}+2))dg_0.
\end{align*}
The independence of $t$ is a straightforward consequence of the above formulas, we omit it since 
is coincides with Lemma 3.6 from Schlenker~\cite{Sch}. 
Let us prove the second part of the proposition. 
Fix $t \in (t_0, \infty)$ and use as a new variable $x \in [t,\infty)$. 
The above equations are of course valid if we replace $t$ by $x$ and $t_0$ by $t$. 
By the Gauss equation and the expressions of 
$dg_x$ and $H^xg_x$ with respect to $dg_t$ using the new variable $x$ we have:

\begin{align*}
\lefteqn{\FP_{z=0} \int_{X \setminus K_{t}} e^{-z|x|} dg}\\
&= \FP_{z=0} \int_{t}^{\infty} \int_{\Sigma_x}e^{-z|x|} dg_xdx\\
&=\FP_{z=0} \int_{t}^{\infty} \int_{\Sigma_t}e^{-z|x|}[(\cosh(x))^{2}+(\kappa_{g_t}+1)(\sinh(x))^2+ H^t\cosh(x)\sinh(x)]dg_tdx\\
&=\FP_{z=0} \int_{t}^{\infty} \int_{\Sigma_t}e^{-z|x|}\left(e^{2x}\left(\frac{\kappa_{g_t}}{4}
+ \frac{1}{2}+ \frac{H^{t}}{4}\right)+e^{-2x}\left(\frac{\kappa_{g_t}}{4}+ \frac{1}{2}- \frac{H^{t}}{4}\right) 
-\frac{\kappa_{g_t}}{2}\right)dg_tdx\\
&=\FP_{z=0}\left[\frac{1}{z-2}e^{(2-z)t}\int_{\Sigma_t}\left(\frac{\kappa_{g_t}}{4}+ \frac{1}{2}
+ \frac{H^{t}}{4}\right)dg_t\right.\\
&\hspace{1.7cm}
+ \left.\frac{1}{2+z}e^{-(z+2)t}\int_{\Sigma_t}\left(\frac{\kappa_{g_t}}{4}
+ \frac{1}{2}-\frac{H^{t}}{4}\right)dg_t+\frac{e^{zt}}{z}\int_{\Sigma_t}\frac{\kappa_{g_t}}{2} dg_t\right]\\
&=-\frac{1}{2}e^{2t}\int_{\Sigma_t}\left(\frac{\kappa_{g_t}}{4}+ \frac{1}{2}+ \frac{H^{t}}{4}\right)dg_t
+ \frac{1}{2}e^{-2t}\int_{\Sigma_t}\left(\frac{\kappa_{g_t}}{4}+ \frac{1}{2}-\frac{H^{t}}{4}\right)dg_t+t\pi \chi(\Sigma)\\
&=-\sinh(2t)\int_{\Sigma_t}\frac{\kappa_{g_t}+2}{4}dg_t -\cosh(2t)\int_{\Sigma_t}\frac{H^{t}}{4}dg_t+t\pi \chi(\Sigma)\\
&=-\frac{1}{4}\int_{\Sigma_t} H^t dg_t +t\pi \chi(\Sigma).
\end{align*}
\end{proof}

Given a quasi-Fuchsian manifold $(X,g)$, 
one would like to have a canonical definition of the renormalized volume, 
which does not depend on the additional choices of the metrics at infinity of $X$.

\begin{definition}
\label{def::renorm_vol_canonical}
The renormalized volume $\Volr(X,g)$ is defined as $\Volr(X,g;h_\cF)$, 
where the metrics $h_\cF$ at infinity of $X$ that are used for the renormalization 
procedure are the unique metrics in the conformal class $[h_0]$ having constant Gaussian curvature $-4$.
\end{definition}

This type of ``canonical'' renormalization first appeared in Krasnov \cite{Kr}.
Notice that, by the Gauss--Bonnet formula, the area, with respect to $h_\cF$, 
of the boundary at infinity $\{\infty\}\times\Sigma$ of each funnel of $X$ equals 
$-\frac{\pi\chi(\Sigma)}{2}$. The following lemma appears in Krasnov--Schlenker~\cite[Section 7]{KS08}; 
for the sake of completion we include below a (new) proof using our current definition of renormalized volume.

\begin{lemma}
\label{lem::maxvol}
Let $(X,g)$ be a quasi-Fuchsian hyperbolic $3$-manifold. 
Among all metrics $h_0\in[h_0]$ of area equal to $-\frac{\pi\chi(\Sigma)}{2}$, 
the renormalized volume $\Volr(X,g;h_0)$ attains its maximum for $h_0=h_\cF$.
\end{lemma}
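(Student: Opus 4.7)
The strategy is to derive an explicit formula for $\Volr$ as a functional of the conformal factor $\varphi$ parametrizing metrics in $[h_0]$ via $h_0=e^{2\varphi}h_\cF$, and then to conclude by a short convexity argument on the area-constraint submanifold. Write $h_0=e^{2\varphi}h_\cF$ for a unique $\varphi\in C^\infty(\Sigma,\bR)$; by Gauss--Bonnet $h_\cF$ has area $-\pi\chi(\Sigma)/2$, so the constraint becomes $\int_\Sigma(e^{2\varphi}-1)\,dh_\cF=0$. Set $F(\varphi):=\Volr(X,g;e^{2\varphi}h_\cF)$.

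\emph{Key formula.} The central step is to derive, from the Krasnov--Schlenker formula of Proposition~\ref{propks}, the identity
\[
F(\varphi)=F(0)+2\int_\Sigma\varphi\,dh_\cF-\tfrac{1}{4}\int_\Sigma|d\varphi|^2_{h_\cF}\,dh_\cF.
\]
The guiding principle is that replacing $h_\cF$ by $e^{2\varphi}h_\cF$ corresponds asymptotically to shifting the funnel coordinate $t\mapsto t-\varphi$; tracking this shift through the three terms $\Vol(K_t)$, $-\tfrac14\int_{\Sigma_t}H^t\,dg_t$ and $t\pi\chi(\Sigma)$ of the KS formula yields the above identity. A consistency check on constant variations $\varphi=c$: the formula gives $F(c)-F(0)=2c\,\mathrm{Area}(h_\cF)=-c\pi\chi(\Sigma)$, which also follows directly from Proposition~\ref{propks} since a constant shift $t\mapsto t-c$ only affects the linear term $t\pi\chi(\Sigma)$.

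\emph{Convexity conclusion.} On the constraint submanifold, adding the zero quantity $\int(e^{2\varphi}-1)\,dh_\cF$ to the right-hand side rewrites the formula as
\[
F(\varphi)-F(0)=\int_\Sigma\bigl[2\varphi-(e^{2\varphi}-1)\bigr]\,dh_\cF-\tfrac{1}{4}\int_\Sigma|d\varphi|^2_{h_\cF}\,dh_\cF.
\]
By the elementary inequality $e^{2\varphi}\geq 1+2\varphi$ (strict unless $\varphi=0$), the first integrand is non-positive with equality iff $\varphi\equiv 0$; the second integral is non-positive with equality iff $\varphi$ is constant, and the only constant admissible on the constraint is $\varphi\equiv 0$. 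Thus $F(\varphi)\leq F(0)$ with equality iff $\varphi\equiv 0$, which is the content of the lemma.

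\emph{Main obstacle.} The technical heart of the argument is the derivation of the explicit formula for $F(\varphi)$. This requires carefully tracking how the compact set $K_t$, the induced boundary metric $g_t$ and the mean curvature $H^t$ respond to the asymptotic funnel shift $t\mapsto t-\varphi$ induced by a conformal change of $h_0$. A natural route is first to compute the first variation $\dot F(\varphi)[\dot\varphi]=-\tfrac12\int_\Sigma K_{h_0}\,\dot\varphi\,dh_0$ along conformal directions, and then to integrate from $\varphi=0$ to obtain $F(\varphi)$ in closed form, the integration constant being pinned down by the constant-$\varphi$ sanity check above. Once the explicit formula is in hand, the rest of the argument reduces to the elementary convex-analysis observation of the previous paragraph.
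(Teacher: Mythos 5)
Your proposal is correct and is essentially the paper's own argument: the ``key formula'' you propose to derive is precisely the conformal-change formula $\Volr(X,g;e^{2\omega}h)=\Volr(X,g;h)-\tfrac14\int_\Sigma(|d\omega|^2_h+2\kappa_h\omega)\,dh$ that the paper quotes from Guillarmou--Moroianu--Schlenker \cite{GMS}, specialized to $h=h_\cF$ with $\kappa_{h_\cF}=-4$, so the one step you flag as the ``main obstacle'' (and only sketch) can simply be cited rather than rederived from Proposition~\ref{propks}. Your convexity conclusion matches the paper's, which writes $\omega=c+\omega^\perp$ and applies $e^x\ge 1+x$ to deduce $\int_\Sigma\omega\,dh_\cF\le 0$ under the area constraint; your pointwise variant $\int_\Sigma\bigl[2\varphi-(e^{2\varphi}-1)\bigr]\,dh_\cF\le 0$ uses the same elementary inequality and additionally records the equality case $\varphi\equiv 0$.
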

The lemma holds evidently for every $\kappa<0$ when we maximize $\Volr$ among metrics of area
$-\frac{2\pi\chi(\Sigma)}{\kappa}$ in a fixed conformal class, the maximizer being the unique
metric with constant Gaussian curvature $\kappa<0$ in that conformal class.
\begin{proof}
From Guillarmou--Moroianu--Schlenker~\cite{GMS}, recall the conformal change formula of the renormalized volume. Let $h$ be a metric at infinity of $(X,g)$ and multiply $h$ by $e^{2\omega}$, for some smooth function $\omega:\{\infty\} \times \Sigma\to\bR$. We have that
\begin{align}\label{confchrenv}
\Volr(X,g;e^{2\omega}h)=\Volr(X,g;h)-\tfrac14 \int_\Sigma (|d\omega|^2_h+2\kappa_h\omega)dh.
\end{align}
In particular, for $h=h_\cF$ we obtain 
\begin{equation}\label{ineq1}
\Volr(X,g;e^{2\omega}h_\cF)-\Volr(X,g;h_\cF)\leq 2\int_\Sigma \omega dh_\cF.
\end{equation}
Now, we assume that $e^{2\omega}h_\cF$ has the same area as $h_\cF$; so 
$\int_\Sigma e^{2\omega}dh_\cF=\int_\Sigma dh_\cF$. Write $\omega=c+\omega^\perp$, 
with $c$ being a constant and $ \int_\Sigma \omega^\perp dh_\cF=0$ 
(this is Hodge decomposition for $0$-forms on $\Sigma$). 
Using the inequality $e^x\geq 1+x$, valid for all real numbers $x$, we get
\begin{align*}
\int_\Sigma dh_\cF={}&\int_\Sigma e^{2\omega} dh_\cF= e^{2c}\int_\Sigma e^{2\omega^\perp} dh_\cF \geq e^{2c}\int_\Sigma (1+2\omega^\perp)dh_\cF=e^{2c}\int_\Sigma dh_\cF,
\end{align*}implying that $c\leq 0$. Hence $\int_\Sigma \omega dh_\cF=\int_\Sigma c dh_\cF\leq 0$, proving the assertion of the lemma, in light of~(\ref{ineq1}).
\end{proof}

Moreover, when dilating $h_0$ by a constant greater than $1$, the renormalized volume increases. More precisely, we have:

\begin{lemma}
\label{lem::ineqdilat}
Let $(X,g)$ be a quasi-Fuchsian hyperbolic $3$-manifold. Let $c>0$ and 
let $[h_0]$ be the induced conformal class on the boundary at infinity of $X$, 
which by abuse of notation is denoted $\Sigma$. Let $h_0$ be a metric in $[h_0]$. Then
\[\Volr(X,g;c^2h_0)=\Volr(X,g;h_0)-\pi\chi(\Sigma) \ln c .
\]
\end{lemma}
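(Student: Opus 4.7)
The plan is to derive this identity directly from the conformal change formula \eqref{confchrenv} used in the proof of Lemma~\ref{lem::maxvol}, by specializing to constant conformal factors. Write $c^2 = e^{2\omega}$ with $\omega = \ln c$, viewed as a constant function on $\Sigma$. Then $c^2 h_0 = e^{2\omega} h_0$, and I can feed this into \eqref{confchrenv} with $h = h_0$.

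Since $\omega$ is constant, $d\omega = 0$, so the first term in the conformal anomaly vanishes and I am left with
\[
\Volr(X,g;c^2 h_0) = \Volr(X,g;h_0) - \tfrac{1}{4}\int_\Sigma 2\kappa_{h_0}\,\omega\, dh_0 = \Volr(X,g;h_0) - \tfrac{\ln c}{2}\int_\Sigma \kappa_{h_0}\,dh_0.
\]
The remaining step is to evaluate $\int_\Sigma \kappa_{h_0}\,dh_0$. This is independent of the representative $h_0$ of the conformal class by the Gauss--Bonnet theorem, which gives $\int_\Sigma \kappa_{h_0}\,dh_0 = 2\pi\chi(\Sigma)$. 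Substituting yields exactly $\Volr(X,g;c^2 h_0) = \Volr(X,g;h_0) - \pi\chi(\Sigma)\ln c$, as claimed.

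There is no real obstacle here: the entire argument is a one-line specialization of \eqref{confchrenv} to constant $\omega$, combined with Gauss--Bonnet. The only point worth flagging is that \eqref{confchrenv} was stated in the course of the previous proof for a general smooth $\omega$, and one must simply observe that it applies verbatim when $\omega$ is constant; in that case the gradient term drops out and the sign/normalization conventions produce precisely the stated constant $-\pi\chi(\Sigma)\ln c$.
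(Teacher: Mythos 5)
Your proof is correct and is exactly the paper's argument: specialize the conformal change formula \eqref{confchrenv} to constant $\omega=\ln c$, drop the gradient term, and evaluate $\int_\Sigma \kappa_{h_0}\,dh_0=2\pi\chi(\Sigma)$ by Gauss--Bonnet. Nothing further is needed.
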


\begin{proof}
This is a particular case of the formula~(\ref{confchrenv}) for the conformal change of the renormalized volume, in which $\omega$ is constant:
\[\Volr(X,g;e^{2\omega}h_0)=\Volr(X,g;h_0)-\tfrac14 \omega\int_\Sigma 2\kappa_{h_0}dh_0=\Volr(X,g;h_0)-\omega \pi \chi(\Sigma)
\](in the last equality we have used the Gauss--Bonnet formula).
\end{proof}

\section{Proof of the main result}
\label{subsec::renorm_vol_A-F}
Let $(X,g)$ be an almost-Fuchsian manifold. By Uhlenbeck~\cite{Uhl}, recall that $X$ 
contains a unique embedded minimal surface, which we denote $\Sigma$ in what follows. 
By considering the global decomposition $X=\bR \times \Sigma$ (see Remark~\ref{rem::almost-F_all_t}) 
we obtain two metrics $h_0^+,h_0^-$ in the corresponding conformal classes at $\pm \infty$ of $X$,
defined by $h_0^\pm:= (e^{-2|t|}g)_{|t=\pm\infty}.$ 
Using the Krasnov--Schlenker definition of the renormalized volume from Proposition~\ref{propks}, 
it is evident that, with respect to the globally defined function $t$ on the almost-Fuchsian
manifold $X$, we have
\[\Volr(X,g;h_0^\pm)=0.\]
This quantity is therefore not very interesting, but it will prove helpful when examining $\Volr(X,g)$. 
\begin{remark}
The vanishing of $\Volr(X,g;h_0^\pm)$ is essentially the content of Proposition~3.7 
in Huang--Wang~\cite{huangwang}, where a slightly different 
definition is used for the renormalized volume. In loc.\ cit.\ 
the renormalized volume $RV(X,g;h_0^\pm)$ equals
$\pi\chi(\Sigma)$ independently of the metric on $X$, and its sign is interpreted 
as some sort of ``negativity of the mass''. 
We defend here the view that the Krasnov--Schlenker definition seems to be the most meaningful, 
as opposed
to Guillarmou--Moroianu--Schlenker~\cite{GMS} or Huang--Wang~\cite{huangwang}, 
and that with this definition the sign of the volume appears to be positive, 
at least near the Fuchsian locus.
\end{remark}
Our goal is to control the renormalized volume of $(X,g)$ when the metric at $\pm \infty$ is $h_{\cF}^\pm$, 
the unique metrics of Gaussian curvature $-4$ inside the corresponding conformal class $[h_0^{\pm}]$ at infinity of $X$. 
Recall from Definition~\ref{def::renorm_vol_canonical} that, for this canonical choice (with non-standard constant $-4$) 
we obtain ``the'' renormalized volume of the almost-Fuchsian manifold $(X,g)$:
\[\Volr(X,g)=\Volr(X,g;h_\cF^\pm).\]

\begin{theorem}\label{thm::pos_renorm_vol}
The renormalized volume $\Volr(X,g)$ of an almost-Fuchsian hyperbolic $3$-ma\-ni\-fold $(X,g)$ is non-negative, 
being zero only at the Fuchsian locus, i.e., for $g$ as in Definition~\ref{def::quasi-Fuchsian} with $A=0$ and $g_0$ hyperbolic.
\end{theorem}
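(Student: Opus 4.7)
The plan is to exploit the observation, noted in the paragraph preceding the theorem, that with respect to the global product decomposition of an almost-Fuchsian manifold $X=\bR\times\Sigma$ one has $\Volr(X,g;h_0^\pm)=0$. We then bridge to the canonical metrics $h_\cF^\pm$ by a scalar dilation followed by a conformal rearrangement at fixed area, invoking Lemmas~\ref{lem::ineqdilat} and~\ref{lem::maxvol} respectively.

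The first task, and the main computational input, is to compare the area of $h_0^\pm$ to that of $h_\cF^\pm$. Writing $\pm\lambda$ for the principal curvatures of the minimal surface (so $\mathrm{tr}(A)=0$), the formulas $h_0^\pm=\tfrac14 g_0((1\pm A)^2\cdot,\cdot)$ give
\[\mathrm{Area}(h_0^\pm)=\tfrac14\int_\Sigma \det(1\pm A)\,dg_0=\tfrac14\int_\Sigma(1+\det A)\,dg_0=\tfrac14\int_\Sigma(1-\lambda^2)\,dg_0.\]
Substituting the Gauss equation $\det A=\kappa_{g_0}+1=-\lambda^2$ and the Gauss--Bonnet identity $\int_\Sigma\kappa_{g_0}\,dg_0=2\pi\chi(\Sigma)$ yields
\[\mathrm{Area}(h_0^\pm)=-\tfrac{\pi\chi(\Sigma)}{2}-\tfrac12\int_\Sigma\lambda^2\,dg_0\leq -\tfrac{\pi\chi(\Sigma)}{2}=\mathrm{Area}(h_\cF^\pm),\]
with equality if and only if $\lambda\equiv 0$. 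Almost-Fuchsianness enters here in a crucial way, through minimality ($\mathrm{tr}(A)=0$) and through the eigenvalue bound $|\lambda|<1$, which guarantees $\det(1\pm A)>0$.

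Let $c\geq 1$ be the unique constant such that $c^2 h_0^\pm$ has area $-\pi\chi(\Sigma)/2$ on each boundary component (both components have the same area, so a single $c$ works simultaneously). Applying Lemma~\ref{lem::ineqdilat} to the full boundary at infinity, whose total Euler characteristic is $2\chi(\Sigma)$, gives
\[\Volr(X,g;c^2 h_0^\pm)=\Volr(X,g;h_0^\pm)-2\pi\chi(\Sigma)\ln c=-2\pi\chi(\Sigma)\ln c\geq 0.\]
By Lemma~\ref{lem::maxvol}, $h_\cF^\pm$ maximizes $\Volr$ over metrics in its conformal class at infinity with the prescribed total area; since $c^2 h_0^\pm$ is such a metric,
\[\Volr(X,g)=\Volr(X,g;h_\cF^\pm)\geq\Volr(X,g;c^2 h_0^\pm)\geq 0.\]

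For the equality case, $\Volr(X,g)=0$ forces the second inequality to be tight, hence $\ln c=0$, whence $\int_\Sigma\lambda^2\,dg_0=0$, so $\lambda\equiv 0$ and $A\equiv 0$. The Gauss equation then gives $\kappa_{g_0}=-1$, so $g_0$ is hyperbolic and $(X,g)$ lies at the Fuchsian locus. Conversely, in the Fuchsian case $h_0^\pm=\tfrac14 g_0=h_\cF^\pm$ and hence $\Volr(X,g)=\Volr(X,g;h_0^\pm)=0$. The only non-routine step in the entire argument is the area comparison; once that is in hand, the result follows from chaining the two preceding lemmas with the vanishing $\Volr(X,g;h_0^\pm)=0$.
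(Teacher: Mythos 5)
Your proof is correct and follows the same overall strategy as the paper: the vanishing of $\Volr(X,g;h_0^\pm)$ for the foliation based at the minimal surface, an area comparison between $h_0^\pm$ and $h_\cF^\pm$, and then Lemmas~\ref{lem::ineqdilat} and~\ref{lem::maxvol} chained exactly as you chain them, with the same treatment of the equality case. The one genuine point of divergence is how the area inequality is obtained. The paper proves the pointwise curvature bound $\kappa_{h_0^\pm}\leq -4$ (Lemma~\ref{lem::lemh0}) by computing the limit of $e^{2t}\kappa_{g_t}$ along the foliation, and then integrates via Gauss--Bonnet; you instead compute the area of $h_0^\pm=\tfrac14 g_0((1\pm A)^2\cdot,\cdot)$ directly, using minimality ($\Tr A=0$) to get $\det(1\pm A)=1+\det A=1-\lambda^2$ and the Gauss equation to rewrite this as $-\kappa_{g_0}-2\lambda^2$. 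Your computation is more elementary (no limiting argument, no curvature of the rescaled metrics) and isolates exactly what the argument needs, namely the integrated statement with an explicit defect term $\tfrac12\int_\Sigma\lambda^2\,dg_0$; the paper's route yields the stronger pointwise information that $h_0^\pm$ has curvature at most $-4$ everywhere, which is of independent interest but not required for the theorem. Your bookkeeping of the Euler characteristic of the two-component boundary in Lemma~\ref{lem::ineqdilat}, and the observation that a single dilation constant $c$ serves both ends because $\det(1+A)=\det(1-A)$, are both correct and, if anything, slightly more careful than the paper's.
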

\begin{proof}
Denote the principal curvatures of the unique embedded minimal surface $\Sigma$ of $X$ by $\pm\lambda$ for some
continuous function $\lambda:\Sigma\to [0,\infty)$.  
Recall that $\sup\limits_{x \in \Sigma}|\lambda(x)| <1$ and that the decomposition of the metric $g$ takes the form~(\ref{metricg}), 
for all $t \in \bR$.

\begin{lemma}\label{lem::lemh0}
The Gaussian curvature of $h_0^\pm$ is bounded above by $-4$, with equality 
if and only if $X$ is Fuchsian. 
\end{lemma}

\begin{proof}
Let $\Sigma_t$ be the leaf of the foliation at time $t$. We compute the Gaussian curvature 
$\kappa_{h_0^{+}}$ as the limit of the curvature of $e^{-2 t}g_t$ as $t\to +\infty$. 
From the proof of Proposition~\ref{propks}, the shape operator of $\Sigma_t$ is 
$A_t=\tfrac12 g_t^{-1}{g}_t'
=(\cosh t+A\sinh t)^{-1}(A\cosh t +\sinh t)$.
By the Gauss equation~(\ref{hGe}), we get
\begin{align*}
\kappa_{g_t}={}&\det A_t-1\\
={}&\det[(1+A+e^{-2t}(1-A))^{-1}(1+A -e^{-2t}(1-A))]-1\\
={}&\frac{\left(1-e^{-2t}\frac{1-\lambda}{1+\lambda}\right)\left(1-e^{-2t}\frac{1+\lambda}{1-\lambda}\right)}
{\left(1+e^{-2t}\frac{1-\lambda}{1+\lambda}\right)\left(1+e^{-2t}\frac{1+\lambda}{1-\lambda}\right)}-1
\end{align*}
so $\kappa_{e^{-2t}g_t}=e^{2t}\kappa_{g_t}$
converges to $-2\left(\frac{1-\lambda}{1+\lambda}+\frac{1+\lambda}{1-\lambda} \right)\leq -4$
as $t\to\infty$. The inequality for $h_0^-$ is proved similarly. Clearly the equality 
holds if and only if $\lambda=0$, i.e., $A= 0$.
\end{proof}
Using the Gauss--Bonnet formula, Lemma~\ref{lem::lemh0} implies that the area of $(\{\pm\infty\}\times\Sigma, h_0^{\pm})$ is at most equal to $-\pi\chi(\Sigma)/2$, which, again by Gauss--Bonnet, is the area of $h_\cF^\pm$:
\begin{align*}
-4\int_\Sigma dh_0^\pm \geq \int_\Sigma \kappa_{h_0^\pm} dh_0^\pm =2\pi\chi(\Sigma)=-4 \int_\Sigma dh_\cF^\pm.
\end{align*} 
So
\begin{equation}\label{ineqvol}
\Vol(\Sigma, h_0^\pm)\leq \Vol(\Sigma, h_\cF^\pm)=-\pi\chi(\Sigma)/2, 
\end{equation}
with equality if and only if $\kappa_{h_0^\pm}=-4$, which is equivalent to $\lambda=0$. 

Let $c^2:=\Vol(\Sigma, h_\cF^\pm)/\Vol(\Sigma, h_0^\pm)$. By~(\ref{ineqvol}), $c\geq 1$. 
Applying Lemma~\ref{lem::ineqdilat} we obtain 
\[\Volr(X,g;h_0^\pm) \leq \Volr(X,g;c^2 h_0^\pm).\] 
Since, by definition $\Vol(\Sigma, c^2h_0^\pm)=
\Vol(\Sigma, h_\cF^\pm)$, Lemma~\ref{lem::maxvol} implies
\[\Volr(X,g;c^2h_0^\pm) \leq \Volr(X,g; h_\cF^\pm)=\Volr(X,g).\]
These inequalities are enough to conclude that $\Volr(X,g)\geq \Volr(X,g;h_0^\pm)=0$. 

Let us now analyze the equality case. If $(X,g)$ is Fuchsian, 
then the unique embedded minimal surface $\Sigma$ of $X$ has vanishing shape operator 
$A$, therefore $\lambda=-\lambda=0$. 
By the proof of Lemma~\ref{lem::lemh0}, we obtain that $\kappa_{h_0^{\pm}}=-4$, 
thus $\Volr(X,g)=0$. 

Conversely, assume that $(X,g)$ is almost-Fuchsian and that $\Volr(X,g)=0$. 
This implies that $\Volr(X,g;h_0^\pm)=\Volr(X,g;c^2 h_0^\pm)=\Volr(X,g)= \Volr(X,g;h_\cF^\pm)=0$, 
where $c^2$ was defined above as $\Vol(\Sigma, h_\cF^\pm)/\Vol(\Sigma, h_0^\pm)$. 
Thus, $c=1$ implying, by using the equality case in the inequality \eqref{ineqvol}, 
that $\kappa_{h_0^\pm} \equiv-4$
and that $\lambda=-\lambda=0$. Thus, the minimal surface $\Sigma$ 
is in fact totally geodesic, hence $(X,g)$ must be Fuchsian.
\end{proof}

\subsection*{Acknowledgments} We are indebted to Andy Sanders and Jean-Marc Schlenker, whom
we consulted about almost-Fuchsian metrics and renormalized volumes. 
We thank the anonymous referee for several remarks improving the presentation of the manuscript.
Colin Guillarmou pointed out to one of us
(S.~M.) why the question of positivity for the renormalized volume is still open; 
his explanations are kindly acknowledged.

\end{document}